\documentclass[12pt,reqno]{amsart} 

\usepackage{mathrsfs,amsfonts,amsthm,amsmath,amssymb,thmtools} 
\usepackage[dvipsnames]{xcolor} 

\usepackage{enumitem}                
\usepackage{lineno}                 

\usepackage[numbers,sort&compress]{natbib}  

\usepackage{graphics}
\usepackage{graphicx}   
\graphicspath{{images/}} 
\usepackage{subfig}     
\usepackage{rotating}

\usepackage[hyperindex, pdfencoding = auto, psdextra, bookmarksdepth = 4]{hyperref}  
\hypersetup{
    bookmarksopen      = true,
    bookmarksopenlevel = 1,
    bookmarksnumbered  = true,
    pdftitle    = {},
    pdfauthor   = {},
    linktoc     = page,
    anchorcolor = green,
    colorlinks  = true,        
    linkcolor   = black,
    citecolor   = blue,
    filecolor   = blue,
    urlcolor    = magenta,
}

\usepackage[capitalise, noabbrev, nameinlink]{cleveref}    

\newtheorem{innercustomthm}{Theorem}
\newenvironment{customthm}[1]
    {\renewcommand\theinnercustomthm{#1}\innercustomthm}
    {\endinnercustomthm}

\theoremstyle{plain}

\newtheorem{theorem}{Theorem}[section]
\newtheorem{lemma}[theorem]{Lemma}

\newtheorem{proposition}[theorem]{Proposition}

\theoremstyle{definition}   

\makeatletter
    \def\subsection{\@startsection{subsection}{2}%
    \z@{.5\linespacing\@plus.7\linespacing}{.3\linespacing}%
    {\normalfont\bfseries}}
\makeatother 

\makeatletter                                         
    \newcommand{\LeftEqNo}{\let\veqno\@@leqno}        
\makeatother

\numberwithin{equation}{section}                      

\begin{document}

\

\vspace{-2cm}

\title{Single commutators in $C^*$-algebras}

\author{Lu Cui}
\address{Lu Cui, College of Science, China University of Petroleum (East China), Qingdao, 266580, China}
\email{nilu@upc.edu.cn}

\author{Minghui Ma}
\address{Minghui Ma, School of Mathematical Sciences, Dalian University of Technology, Dalian, 116024, China}
\email{minghuima@dlut.edu.cn}


\begin{abstract}
Let $\mathcal{A}$ be a simple unital $C^*$-algebra with real rank zero, nonempty tracial state space $T(\mathcal{A})$, and strict comparison of projections.
We prove that every element $T$ in $\mathcal{A}$ with $\tau(T)=0$ for all $\tau\in T(\mathcal{A})$ is a single commutator if one of the following conditions holds: $(1)$ $\mathcal{A}$ has unique trace; $(2)$ $\mathcal{A}$ is separable and has stable rank one.
We also prove that every nonscalar element in a simple unital purely infinite $C^*$-algebra is a single commutator.
Applications include UHF algebras, irrational rotation algebras, and Cuntz algebras.
\end{abstract}

\maketitle

\section{Introduction}

A longstanding problem in operator theory and operator algebras is to determine \emph{commutators} in an algebra $\mathcal{A}$, that is, elements of the form $[A,B]=AB-BA$ for some $A,B\in\mathcal{A}$.
By an early theorem of Shoda \cite{Shoda-37}, a finite-dimensional complex matrix is a single commutator if and only if it has trace zero.
On a separable infinite-dimensional Hilbert space $\mathcal{H}$, Brown and Pearcy \cite{BP-65} proved that an operator in $\mathcal{B}(\mathcal{H})$ is a single commutator if and only if it is not of the form $\lambda I+K$ with $\lambda\ne 0$ and $K$ compact.
Their work on type $\mathrm{III}$ factors \cite{BP-66}, and the subsequent papers of Brown, Pearcy, and Topping \cite{BPT-68} and Halpern \cite{Halpern-69}, extended this line of investigation to properly infinite von Neumann algebras.
In type $\mathrm{II}_1$ von Neumann algebras, Fack and de la Harpe \cite{FH-80} proved that every operator with center-valued trace zero is a sum of at most ten commutators with norm estimates.
Goldstein and Paszkiewicz \cite{GP-92} proved that every self-adjoint operator with center-valued trace zero is a sum of four commutators.
Later, Marcoux \cite{Marcoux-06} showed that every operator with trace zero in a type $\mathrm{II}_1$ factor is a sum of two commutators, and every self-adjoint operator with trace zero is a sum of four \emph{self-commutators}, that is, a commutator of the form $[A,A^*]$.
More recently, Wen, Fang, and Yao \cite{WFY-24} showed that every operator with trace zero in a type $\mathrm{II}_1$ factor is a single commutator, which solves a problem proposed by Dykema and Skripka \cite{DS-12}.
They also showed that every self-adjoint operator with trace zero is a self-commutator.

Although significant progress has been achieved for von Neumann algebras, the $C^*$-algebraic setting requires different methods.
Fack \cite[Theorem 3.1]{Fack-82} showed that every self-adjoint element in a unital simple AF-algebra that vanishes under every tracial state is a sum of at most seven self-commutators.
Thomsen \cite{Thomsen-93} extended the finite-sum methods to inductive limits of homogeneous algebras.
Subsequently, Marcoux \cite[Corollary 4.9]{Marcoux-06} reduced the number to two commutators in several classes of $C^*$-algebras, including UHF algebras and irrational rotation algebras.
Kaftal, Ng, and Zhang \cite[Theorem 3.4]{KNZ-14} obtained the corresponding two-commutator result for simple unital separable $C^*$-algebras with real rank zero, stable rank one, and strict comparison of projections.
For simple unital purely infinite $C^*$-algebras, Pop \cite[Theorem 1 and Remark 3]{Pop-02} proved that every element is a sum of two commutators.
For a comprehensive treatment, we refer the reader to \cite{DFWW-04,DK-18,Marcoux-10,Marcoux-22,Ng-12,Ng-14,Ng-15}.
The aim of the current paper is to solve the longstanding single commutator problem in several classes of $C^*$-algebras.

Throughout, let $\mathcal{A}$ be a simple unital $C^*$-algebra with identity $I$ and $T(\mathcal{A})$ the tracial state space of $\mathcal{A}$.
The set of all commutators in $\mathcal{A}$ is denoted by
\begin{equation*}
	\mathfrak{c}(\mathcal{A})=\{[A,B]\colon A,B\in\mathcal{A}\}.
\end{equation*}
A simple unital $C^*$-algebra $\mathcal{A}$ with $T(\mathcal{A})\ne\varnothing$ is said to have \emph{strict comparison of projections} if for any two projections $P$ and $Q$ in $\mathcal{A}$, we have
\begin{equation*}
	\tau(P)<\tau(Q)~\text{for all}~\tau\in T(\mathcal{A})\Longrightarrow P\precsim Q.
\end{equation*}
For more related definitions, the reader is referred to \cite{Blackadar-88,Blackadar-98,BP-91}.
We now present our main theorems as follows.

\begin{customthm}{\ref{thm trace}}
Let $\mathcal{A}$ be a simple unital $C^*$-algebra with real rank zero, nonempty tracial state space $T(\mathcal{A})$, and strict comparison of projections.
If one of the following conditions holds: $(1)$ $\mathcal{A}$ has unique trace, $(2)$ $\mathcal{A}$ is separable and has stable rank one, then
\begin{equation*}
	\bigcap_{\tau\in T(\mathcal{A})}\ker\tau=\mathfrak{c}(\mathcal{A}).
\end{equation*}
\end{customthm}

\begin{customthm}{\ref{thm traceless}}
Let $\mathcal{A}$ be a simple unital purely infinite $C^*$-algebra.
Then
\begin{equation*}
	(\mathcal{A}\setminus\mathbb{C}I)\cup\{0\}=\mathfrak{c}(\mathcal{A}).
\end{equation*}
\end{customthm}

As applications, we give a complete characterization of single commutators in UHF algebras \cite{Glimm-60}, irrational rotation algebras \cite{Rieffel-81}, and Cuntz algebras \cite{Cuntz-77}.
Moreover, we give a partial solution to the general commutator characterization problem in Marcoux's survey \cite[Problem~3]{Marcoux-22} and affirmative answers to two questions in Ng's paper \cite[p.~58--59]{Ng-15}.
It is still a challenging question to determine the single commutators in the Jiang-Su algebra \cite{JS-99} and reduced free group $C^*$-algebras.

\section{Proofs}

In the following elementary lemma, we present a standard matrix operation.

\begin{lemma}\label{lem 12=V}
Let $\mathcal{A}$ be a unital $C^*$-algebra, $\{P_j\}_{j=1}^n$ projections in $\mathcal{A}$ with sum $I$, and $\{V_{1j}\}_{j=2}^n$ partial isometries in $\mathcal{A}$ such that
\begin{equation*}
	V_{12}V_{12}^*=P_1,\quad V_{1j}V_{1j}^*\leqslant P_1\quad\text{for}~3\leqslant j\leqslant n,\quad  V_{1j}^*V_{1j}=P_j\quad\text{for}~2\leqslant j\leqslant n.
\end{equation*}
Let $T$ be an element in $\mathcal{A}$ such that $P_1TP_2=V_{12}$.
Then $T$ is similar to an element $S$ in $\mathcal{A}$ such that
\begin{equation*}
	P_1SP_2=V_{12}\quad\text{and}\quad P_jSP_j=0\quad\text{for}~3\leqslant j\leqslant n.
\end{equation*}
\end{lemma}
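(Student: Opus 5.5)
The plan is to build $S$ by conjugating $T$ with unipotent elements of the form $I+X$, where $X$ is a "matrix unit" type element with $X^2=0$, so that $(I+X)^{-1}=I-X$. One index $j\in\{3,\dots,n\}$ is handled at a time, and each step must leave $P_1(\cdot)P_2$ and every $P_k(\cdot)P_k$ with $k\ne 1,2,j$ unchanged. For a fixed $j$ I would use two such elements:
\begin{equation*}
	A=\lambda V_{1j}^*=P_jAP_1 \qquad\text{and}\qquad B=P_2BP_j,
\end{equation*}
where $\lambda>0$ is a scalar and $B$ is an element still to be chosen. Both satisfy $A^2=B^2=0$, because $P_1P_j=0$ and $P_2P_j=0$.

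The first step is to put $T'=(I+B)T(I-B)$ and $S_j=(I+A)T'(I-A)$ and compute the relevant corners using only orthogonality of the $P$'s.
\begin{itemize}
\item $P_jT'P_j=P_jTP_j-P_jTP_2B$, since $P_jB=0$.
\item $P_1T'P_j=P_1TP_j-V_{12}B$, since $P_1TP_2=V_{12}$.
\item $P_jS_jP_j=P_jT'P_j+AP_1T'P_j$, since $AP_j=0$ kills the other terms.
\end{itemize}
Combining these, the requirement $P_jS_jP_j=0$ becomes the equation
\begin{equation*}
	(P_jTP_2+\lambda W)B=P_jTP_j+\lambda V_{1j}^*P_1TP_j,\qquad W:=V_{1j}^*V_{12},
\end{equation*}
to be solved for $B$. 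The hypotheses give $WW^*=V_{1j}^*P_1V_{1j}=P_j$, since $V_{12}V_{12}^*=P_1$ and $V_{1j}V_{1j}^*\le P_1$. They also give $W^*W\le P_2$, using $V_{12}^*V_{12}=P_2$.

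Solving this equation is the main point, and I would look for $B$ of the form $W^*R(\cdots)$. Multiplying on the right by $W^*$ gives
\begin{equation*}
	(P_jTP_2+\lambda W)W^*=\lambda P_j+P_jTP_2W^*.
\end{equation*}
For $\lambda>\|T\|$ this is invertible in the corner $P_j\mathcal{A}P_j$ by a Neumann series; call its inverse $R$. I then set
\begin{equation*}
	B=W^*R\bigl(P_jTP_j+\lambda V_{1j}^*P_1TP_j\bigr),
\end{equation*}
which lies in $P_2\mathcal{A}P_j$ and solves the equation. This is where $P_1TP_2=V_{12}$ and $V_{12}V_{12}^*=P_1$ are genuinely used.

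Next I check that nothing else is disturbed.
\begin{itemize}
\item Since $BP_2=0$, $P_1B=0$, $P_1A=0$ and $AP_2=0$, the corner $P_1S_jP_2$ still equals $V_{12}$.
\item For $k\notin\{1,2,j\}$, $P_k$ annihilates $A$ and $B$ from both sides, so $P_kS_jP_k=P_kTP_k$.
\end{itemize}
Applying this step successively for $j=3,\dots,n$, with $T$ replaced each time by the previous output, keeps all earlier zero diagonal corners and $P_1(\cdot)P_2=V_{12}$. The final element $S$ is similar to $T$ via a product of invertibles $I\pm A$ and $I\pm B$. The only delicate points are the bookkeeping of which products vanish and making sure the invertibility argument takes place in the corner $P_j\mathcal{A}P_j$, with unit $P_j$.
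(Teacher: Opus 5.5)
Your proof is correct. Structurally it is the paper's argument: you first conjugate by $I\pm$ a square-zero element of $P_2\mathcal{A}P_j$ (the paper's $X$), then by $I\pm$ a square-zero element of $P_j\mathcal{A}P_1$ (the paper's $Y$).

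The difference lies only in how the two elements are chosen. You fix the second as $\lambda V_{1j}^*$ and solve a linear equation for the first. This costs you a Neumann-series inversion of $\lambda P_j+P_jTP_2W^*$ in $P_j\mathcal{A}P_j$, with $\lambda$ depending on the current norm at each step.

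The paper instead chooses the first element to normalize the off-diagonal corner exactly. In your notation, take $B=V_{12}^*(P_1TP_j-V_{1j})$. Then $P_1T'P_j=P_1TP_j-V_{12}B=V_{1j}$, since $V_{12}V_{12}^*=P_1$ and $P_1V_{1j}=V_{1j}$. Next take $A=-(P_jT'P_j)V_{1j}^*$, which gives $P_jS_jP_j=P_jT'P_j-P_jT'P_jV_{1j}^*V_{1j}=0$. This yields closed-form conjugators with no analytic input. Because the pieces for different $j$ do not interact, the paper simply sums them over $j$ and treats all $j\geqslant 3$ at once instead of iterating.

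Your version uses the same hypotheses; for instance, $V_{12}V_{12}^*=P_1$ enters through $WW^*=P_j$. So it gains no generality, but it is a valid alternative choice within the same family of conjugations.
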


\begin{proof}
Let $Q=I-P_1-P_2$ and
\begin{equation*}
	X=\sum_{j=3}^{n}V_{12}^*(V_{1j}-P_1TP_j)\in P_2\mathcal{A}Q.
\end{equation*}
Then $X^2=0$.
Let $A=(I-X)T(I+X)$.
Then
\begin{equation*}
	P_1AP_j=V_{1j}\quad\text{for}~2\leqslant j\leqslant n.
\end{equation*}
Now let
\begin{equation*}
	Y=\sum_{j=3}^{n}(P_jAP_j)V_{1j}^*\in Q\mathcal{A}P_1.
\end{equation*}
Then $Y^2=0$.
Let $S=(I-Y)A(I+Y)$.
Then $S$ has the desired properties.
\end{proof}

In the next lemma, we show that every nonscalar element satisfies the required condition in \Cref{lem 12=V} up to similarity.

\begin{lemma}\label{lem S12}
Let $\mathcal{A}$ be a simple unital $C^*$-algebra with real rank zero and $T$ a nonscalar element in $\mathcal{A}$.
Then there are nonzero projections $P_1, P_2$ with $P_1P_2=0$, a partial isometry $V_{12}$, and an element $S$ similar to $T$ such that
\begin{equation*}
	V_{12}V_{12}^*=P_1,\quad V_{12}^*V_{12}=P_2,\quad P_1SP_2=V_{12}.
\end{equation*}
\end{lemma}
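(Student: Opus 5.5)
The plan is to reduce to an off-diagonal corner of $T$ that is nonzero, cut it down to a projection on which it is bounded below, and then apply a polar decomposition followed by a diagonal similarity.

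\emph{Step 1: find a nonzero off-diagonal corner.} I claim there is a projection $E\in\mathcal{A}$ with $(I-E)TE\neq 0$. Suppose instead that $(I-E)TE=0$ for every projection $E$. Applying this also to $I-E$ gives $ET(I-E)=0$, so $T$ commutes with every projection. Since $\mathcal{A}$ has real rank zero, the linear span of its projections is dense. Hence $T$ lies in the center of $\mathcal{A}$, which is $\mathbb{C}I$ because $\mathcal{A}$ is simple and unital. This contradicts the assumption that $T$ is nonscalar. Fix such an $E$ and put $x=(I-E)TE\neq 0$, so that $xx^*\in(I-E)\mathcal{A}(I-E)$ is nonzero and positive.

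\emph{Step 2: cut down to a corner where $x$ is bounded below.} Choose $0<\delta<\|xx^*\|$, so that $a=(xx^*-\delta)_+\neq 0$. The hereditary subalgebra $\overline{a\mathcal{A}a}$ again has real rank zero, so it contains a nonzero projection $P_1$. Then $P_1\leqslant I-E$. Moreover, in the bidual $P_1$ lies under the spectral projection $\chi_{[\delta,\infty)}(xx^*)$, which yields
\begin{equation*}
	P_1xx^*P_1\geqslant\delta P_1 .
\end{equation*}
Set $y=P_1x=P_1TE$. Then $yy^*$ is invertible in $P_1\mathcal{A}P_1$. Let $R=(yy^*)^{1/2}$, which is positive and invertible in $P_1\mathcal{A}P_1$, and define
\begin{equation*}
	V_{12}=R^{-1}y.
\end{equation*}
Then $V_{12}V_{12}^*=R^{-1}yy^*R^{-1}=P_1$, so $V_{12}$ is a partial isometry. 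Consequently $P_2:=V_{12}^*V_{12}$ is a projection, and $P_2\leqslant E$ because $y=yE$. In particular $P_2\neq 0$ and $P_1P_2=0$. Since $V_{12}P_2=V_{12}$, we also have $yP_2=y$, and therefore
\begin{equation*}
	P_1TP_2=yP_2=y=RV_{12}.
\end{equation*}

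\emph{Step 3: similarity.} Let $D=R^{-1}+(I-P_1)$, which is invertible with inverse $D^{-1}=R+(I-P_1)$, and put $S=DTD^{-1}$. Since $P_2\leqslant I-P_1$, we have $D^{-1}P_2=P_2$ and $P_1D=R^{-1}$. Hence
\begin{equation*}
	P_1SP_2=R^{-1}P_1TP_2=R^{-1}RV_{12}=V_{12},
\end{equation*}
which is exactly the conclusion of the lemma.

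\emph{Main obstacle.} The only delicate point is Step 2: producing a nonzero projection $P_1$ on which $x$ is genuinely bounded below, namely $P_1xx^*P_1\geqslant\delta P_1$. This is where real rank zero is used in an essential way, through the existence of projections in the hereditary subalgebra generated by $(xx^*-\delta)_+$. If the bidual argument is considered too informal, I would instead use functional calculus. Take $g$ continuous with $g=1$ on $[\delta,\infty)$ and $g=0$ near $0$, and pick $P_1$ in $\overline{a\mathcal{A}a}$. Then $g(xx^*)P_1=P_1$, and $xx^*g(xx^*)\geqslant\delta g(xx^*)$, which gives the required inequality after compressing by $P_1$.
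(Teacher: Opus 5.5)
Your proof is correct. It has the same skeleton as the paper's: a nonzero off-diagonal corner, a subprojection on which that corner is bounded below, normalization by an inverse square root, and a block-diagonal similarity. The differences are which side you normalize on and how real rank zero produces the bounded-below projection.

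The paper works on the source side. With $A=(I-P)TP$, it approximates $A^*A$ inside the unital corner $P\mathcal{A}P$ by a positive finite-spectrum $H$ with $\|A^*A-H\|<\frac14\|A\|^2$. It takes $P_2$ to be the spectral projection of $H$ at $\lambda=\|H\|$, and a one-line norm estimate gives $P_2A^*AP_2\geqslant\frac12\|A\|^2P_2$. It then sets $V_{12}=AP_2(P_2A^*AP_2)^{-1/2}$ and $P_1=V_{12}V_{12}^*$, and conjugates by $B+(I-P_2)$.

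You work on the range side with $xx^*$ and choose $P_1$ first, as a nonzero projection in the hereditary subalgebra generated by $(xx^*-\delta)_+$. This relies on the Brown--Pedersen facts that hereditary subalgebras of real rank zero algebras have real rank zero and contain nonzero projections, together with a support-projection argument in the bidual. That is standard and gives the constant $\delta$, but it is heavier than the paper's use of only the definition of real rank zero in $P\mathcal{A}P$.

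There is one slip in your fallback functional-calculus version. The inequality $xx^*g(xx^*)\geqslant\delta g(xx^*)$ is false: a continuous $g\geqslant 0$ with $g(\delta)=1$ is positive slightly below $\delta$, and there $tg(t)<\delta g(t)$. If $g$ vanishes on $[0,\eta]$, you get $xx^*g(xx^*)\geqslant\eta g(xx^*)$ instead. Compressing by $P_1$ then gives $P_1xx^*P_1\geqslant\eta P_1$, which is enough for $yy^*$ to be invertible in $P_1\mathcal{A}P_1$.
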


\begin{proof}
Since $\mathcal{A}$ is simple and unital, $\mathcal{A}$ has trivial center.
Since $\mathcal{A}$ has real rank zero and $T$ is nonscalar, there exists a projection $P$ such that $PT\ne TP$.
Replacing $P$ by $I-P$ if necessary, we may assume that
\begin{equation*}
	A:=(I-P)TP\ne 0.
\end{equation*}
Since $P\mathcal{A}P$ has real rank zero, there exists a positive finite-spectrum element $H$ in $P\mathcal{A}P$ such that
\begin{equation*}
	\|A^*A-H\|<\frac{1}{4}\|A\|^2.
\end{equation*}
Let $\lambda=\|H\|>\frac{3}{4}\|A\|^2$ and let $P_2$ be the nonzero spectral projection of $H$ with respect to the singleton $\{\lambda\}$.
Then $P_2\leqslant P$ and
\begin{equation*}
	P_2A^*AP_2\geqslant\left(\lambda-\frac{1}{4}\|A\|^2\right)P_2\geqslant\frac{1}{2}\|A\|^2P_2.
\end{equation*}
Hence $P_2A^*AP_2$ is invertible in $P_2\mathcal{A}P_2$.
Let $B=(P_2A^*AP_2)^{-1/2}$ be an invertible element in $P_2\mathcal{A}P_2$.
We define
\begin{equation*}
	V_{12}=AP_2B\quad\text{and}\quad P_1=V_{12}V_{12}^*.
\end{equation*}
Then
\begin{equation*}
	V_{12}^*V_{12}=B(P_2A^*AP_2)B=P_2.
\end{equation*}
Hence $V_{12}$ is a partial isometry and $P_1$ is a nonzero projection.
Since $(I-P)V_{12}=V_{12}$, we have $P_1\leqslant I-P$ and hence $P_1P_2=0$.
Let $X=B+(I-P_2)$ and $S=X^{-1}TX$.
Then $P_1SP_2=V_{12}$.
This completes the proof.
\end{proof}

For our use, we record two lemmas in Marcoux's paper \cite[Lemmas 2.2 and 3.8]{Marcoux-06} as follows.
The proof of \Cref{lem diagonal-commutator} uses Rosenblum's theorem \cite{Rosenblum-56}.

\begin{lemma}\label{lem diagonal-commutator}
Let $\mathcal{A}$ be a unital $C^*$-algebra, $\{P_j\}_{j=1}^n$  projections in $\mathcal{A}$ with sum $I_{\mathcal{A}}$, and $T$ an element in $\mathcal{A}$.
Suppose that there are elements $A_j,B_j\in P_j\mathcal{A}P_j$ such that
\begin{equation*}
	P_jTP_j=[A_j,B_j]\quad\text{for every}~1\leqslant j\leqslant n.
\end{equation*}
Then $T$ is a single commutator.
\end{lemma}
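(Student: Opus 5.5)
The plan is to write $T=[A,B]$ with $A=\sum_j A_j+D$ and $B=\sum_j B_j+Z$. Here $D$ is a block-diagonal element of the form $\sum_j \lambda_j P_j$ with distinct scalars $\lambda_j$, and $Z$ is an off-diagonal correction found by solving Sylvester equations. Rosenblum's theorem supplies the solutions, because the blocks of $A$ will have disjoint spectra.

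\textbf{Step 1 (separating the spectra).} The diagonal entries $A_j$ may be altered without destroying the hypothesis. Indeed, for any scalar $\mu_j$ we have $[A_j+\mu_jP_j,B_j]=[A_j,B_j]$ inside $P_j\mathcal{A}P_j$. So we replace $A_j$ by $A_j':=A_j+\mu_jP_j$, choosing the $\mu_j$ so that the spectra $\sigma_{P_j\mathcal{A}P_j}(A_j')$ are pairwise disjoint. This is possible because each spectrum is compact; for instance, take $\mu_j=3jR$ with $R>\max_k\|A_k\|$. Now set $A=\sum_j A_j'$, which is block diagonal with respect to the decomposition $\{P_j\}$.

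\textbf{Step 2 (off-diagonal entries).} We seek $B=\sum_j B_j+\sum_{i\ne k}X_{ik}$ with $X_{ik}\in P_i\mathcal{A}P_k$. Since $A$ is block diagonal, the $(i,k)$ block of $[A,B]$ for $i\neq k$ equals $A_i'X_{ik}-X_{ik}A_k'$. The $B_j$ contribute nothing there, because $A$ and $B_j$ live on the diagonal. We therefore must solve
\begin{equation*}
A_i'X_{ik}-X_{ik}A_k'=P_iTP_k \qquad (i\neq k).
\end{equation*}
Rosenblum's theorem \cite{Rosenblum-56} applies: the map $X\mapsto A_i'X-XA_k'$ on the $P_i\mathcal{A}P_i$--$P_k\mathcal{A}P_k$ bimodule $P_i\mathcal{A}P_k$ is the difference of commuting left and right multiplication operators. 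The spectrum of left multiplication is contained in $\sigma(A_i')$, and that of right multiplication is contained in $\sigma(A_k')$. Hence the spectrum of the map is contained in $\sigma(A_i')-\sigma(A_k')$, which misses $0$, so the map is invertible. Concretely,
\begin{equation*}
X_{ik}=\frac{1}{2\pi i}\oint_\Gamma (A_i'-z)^{-1}P_iTP_k(A_k'-z)^{-1}\,dz,
\end{equation*}
where $\Gamma$ surrounds $\sigma(A_i')$ but not $\sigma(A_k')$, and the inverses are taken in the corners. This formula is an element of $\mathcal{A}$, so no ambient space is needed.

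\textbf{Step 3 (diagonal entries and conclusion).} The $(j,j)$ block of $[A,B]$ is $[A_j',B_j]=P_jTP_j$, since $A$ is block diagonal and the off-diagonal $X$'s do not feed into diagonal blocks of the commutator. Thus $[A,B]=T$.

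The main point to check carefully is the Rosenblum step in the corner-algebra setting. One must justify that the spectra are computed in $P_j\mathcal{A}P_j$ and that the contour-integral formula really solves the equation there. This is the standard Rosenblum computation, so I expect it to go through.
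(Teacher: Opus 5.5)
Your proof is correct and is essentially the standard argument for this lemma. The paper does not prove it but imports it from Marcoux's Lemma 2.2, noting that it rests on Rosenblum's theorem: translate each $A_j$ by a scalar multiple of $P_j$ so the corner spectra become pairwise disjoint, then solve the off-diagonal equations $A_i'X_{ik}-X_{ik}A_k'=P_iTP_k$ via Rosenblum, exactly as you do.
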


\begin{lemma}\label{lem trace-PAP}
Let $\mathcal{A}$ be a simple unital $C^*$-algebra and $P$ a nonzero projection in $\mathcal{A}$.
Then every tracial state on $P\mathcal{A}P$ is of the form
\begin{equation*}
	\tau_P(\cdot)=\frac{\tau(\cdot)}{\tau(P)}
\end{equation*}
for some $\tau\in T(\mathcal{A})$.
\end{lemma}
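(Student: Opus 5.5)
We need to prove Lemma \ref{lem trace-PAP}: every tracial state on $P\mathcal{A}P$ is the normalized restriction of a tracial state of $\mathcal{A}$. The plan is to extend a given trace $\varphi$ on $P\mathcal{A}P$ to a bounded positive trace $\tilde\varphi$ on $\mathcal{A}$ and then normalize. The extension has to use simplicity. Since $P$ is full, finitely many partial isometries let us move all of $\mathcal{A}$ into matrices over $P\mathcal{A}P$.

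\textbf{Step 1 (fullness).} Since $\mathcal{A}$ is simple and $P\neq 0$, the ideal generated by $P$ is all of $\mathcal{A}$. Hence $I=\sum_{k=1}^m X_k P Y_k$ for some $X_k,Y_k\in\mathcal{A}$. A standard argument then gives elements $W_1,\dots,W_m\in P\mathcal{A}$ with $\sum_k W_k^*W_k=I$. One way to obtain them: $\sum_k X_kPX_k^*$ is positive and invertible (up to the usual estimate), so
\begin{equation*}
	W_k=PX_k^*\Bigl(\sum_j X_jPX_j^*\Bigr)^{-1/2}.
\end{equation*}

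\textbf{Step 2 (extension).} Define
\begin{equation*}
	\tilde\varphi(A)=\sum_{k=1}^m \varphi(W_kAW_k^*)\quad\text{for}~A\in\mathcal{A}.
\end{equation*}
This is well defined because $W_kAW_k^*\in P\mathcal{A}P$. The map $\tilde\varphi$ is clearly a positive linear functional.

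To check the trace property, view $W=(W_1,\dots,W_m)^T$ as an isometry from $\mathcal{A}$ into the column space over $P\mathcal{A}$. Then $\tilde\varphi(A)=(\varphi\otimes\mathrm{Tr})(WAW^*)$, where $\varphi\otimes\mathrm{Tr}$ is a trace on $M_m(P\mathcal{A}P)$. For $A,B\in\mathcal{A}$, using $W^*W=I$ we compute
\begin{align*}
	\tilde\varphi(AB)&=(\varphi\otimes\mathrm{Tr})\bigl((WA W^*)(WBW^*)\bigr)\\
	&=(\varphi\otimes\mathrm{Tr})\bigl((WBW^*)(WAW^*)\bigr)=\tilde\varphi(BA).
\end{align*}
So $\tilde\varphi$ is a bounded tracial positive functional on $\mathcal{A}$.

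\textbf{Step 3 (restriction and normalization).} For $x\in P\mathcal{A}P$ we need $\tilde\varphi(x)=\varphi(x)$. Write $x=xP$ and apply the trace property of $\varphi\otimes\mathrm{Tr}$:
\begin{equation*}
	\tilde\varphi(x)=\sum_k\varphi(W_kxW_k^*)=\sum_k\varphi\bigl((x^{1/2}\text{-type splitting})\bigr)=\varphi\Bigl(Px\sum_k W_k^*W_kP\Bigr)=\varphi(x).
\end{equation*}
More precisely, write $x=(xP)(P)$ and use the trace property for the pair $W_kxP\in P\mathcal{A}P$ and $PW_k^*\in P\mathcal{A}P$; the sum $\sum_k PW_k^*W_kP$ equals $P$. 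In particular $\tilde\varphi(I)\ge\tilde\varphi(P)=1>0$. Set $\tau=\tilde\varphi/\tilde\varphi(I)\in T(\mathcal{A})$. Then $\tau(P)=1/\tilde\varphi(I)$, and for $x\in P\mathcal{A}P$ we get $\varphi(x)=\tilde\varphi(x)=\tau(x)/\tau(P)$, as required.

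\textbf{Main obstacle.} The delicate point is Step 1, producing the $W_k$ with $\sum_k W_k^*W_k=I$ exactly rather than approximately. The approach is to use the algebraic ideal: in a simple unital algebra it already contains $I$, so the sum $\sum_k X_kPX_k^*$ is invertible. Indeed it dominates a multiple of $I$ after the standard trick of replacing $\sum_k X_kPY_k=I$ by $I=\bigl(\sum_k X_kPY_k\bigr)^*\bigl(\sum_k X_kPY_k\bigr)$ and applying Cauchy–Schwarz. Once invertibility is in hand, the inverse square root gives exact equality.
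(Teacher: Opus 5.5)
Your proof is correct. The paper gives no argument of its own for this lemma: it imports it from Marcoux \cite[Lemma 3.8]{Marcoux-06}. So the difference is that you supply a self-contained proof where the paper relies on the literature.

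Your route is the standard one. The column $W=(W_1,\dots,W_m)^T$ with $W_k\in P\mathcal{A}$ and $W^*W=I$ is exactly a Murray--von Neumann subequivalence $I\precsim P\otimes I_m$ in $M_m(\mathcal{A})$. This is the same comparison fact the paper invokes without proof in \Cref{lem P-decomposition-Q}, and you could simply quote it in Step 1. Your $\tau(A)=\sum_k\varphi(W_kAW_k^*)/\sum_k\varphi(W_kW_k^*)$ is then the transport of $\varphi\otimes\mathrm{Tr}$ along this isometry. What your version buys is transparency: it uses only algebraic simplicity of a simple unital $C^*$-algebra (no real rank zero, separability or comparison), and it gives an explicit formula for $\tau$.

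A few small points should be cleaned up.

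\textbf{Invertibility trick.} With $Z=\sum_kX_kPY_k=RC$, where $R=(X_1P,\dots,X_mP)$ is a row and $C=(PY_1,\dots,PY_m)^T$ a column, your stated trick $I=Z^*Z$ gives $I\le\|R\|^2\sum_kY_k^*PY_k$. That is invertibility of $\sum_kY_k^*PY_k$, not of $\sum_kX_kPX_k^*$. Either use $I=ZZ^*\le\|C\|^2\sum_kX_kPX_k^*$, or define $W_k=PY_k\bigl(\sum_jY_j^*PY_j\bigr)^{-1/2}$ instead.

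\textbf{Step 3.} Replace the placeholder ``$x^{1/2}$-type splitting'' with the precise computation. The factorization $W_kxW_k^*=(W_kx)(PW_k^*)$ has both factors in $P\mathcal{A}P$, so
\begin{equation*}
\sum_k\varphi(W_kxW_k^*)=\sum_k\varphi(PW_k^*W_kx)=\varphi(x).
\end{equation*}
Your displayed $\varphi\bigl(Px\sum_kW_k^*W_kP\bigr)$ has the same value, but it is not what the trace swap produces.

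\textbf{Terminology.} The $W_k$ are not individually partial isometries; only the column $W$ is an isometry.
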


The next lemma follows directly from \Cref{lem diagonal-commutator}.
See also \cite[Lemma 3.3]{Ng-15} for a related criterion and \cite[Lemma 2.3]{SWZ-26} for a quantitative version in matrix algebras.
We include a proof for completeness.

\begin{lemma}\label{lem 2.3}
Let $\mathcal{B}$ be a unital $C^*$-algebra and $T$ an element in $M_2(\mathcal{B})$ of the form
$T=
\begin{pmatrix}
	T_{11} & I_{\mathcal{B}}\\
	T_{21} & T_{22}
\end{pmatrix}$.
If $T_{11}+T_{22}$ is the sum of two commutators in $\mathcal{B}$, then $T$ is a single commutator in $M_2(\mathcal{B})$.
\end{lemma}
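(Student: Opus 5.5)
Write $T_{11}+T_{22}=[A_1,B_1]+[A_2,B_2]$ with $A_i,B_i\in\mathcal{B}$. The plan is to conjugate $T$ by a suitable invertible element of $M_2(\mathcal{B})$ so that the two diagonal entries become $[A_1,B_1]$ and $[A_2,B_2]$ separately. Then \Cref{lem diagonal-commutator} applies with the projections $P_1=\mathrm{diag}(I_{\mathcal{B}},0)$ and $P_2=\mathrm{diag}(0,I_{\mathcal{B}})$, and each corner $P_j\mathcal{A}P_j\cong\mathcal{B}$. Since a similar copy of a commutator is a commutator ($X[A,B]X^{-1}=[XAX^{-1},XBX^{-1}]$), this suffices.

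The off-diagonal entry $I_{\mathcal{B}}$ is what makes this possible. I would use the lower unipotent similarity $X=\begin{pmatrix} I & 0\\ C & I\end{pmatrix}$ with $X^{-1}=\begin{pmatrix} I & 0\\ -C & I\end{pmatrix}$. A direct computation gives
\begin{equation*}
	XTX^{-1}=\begin{pmatrix} T_{11}-C & I\\ \ast & T_{22}+C\end{pmatrix}.
\end{equation*}
Here the $(1,1)$ entry is $T_{11}-C$, since the $I$ in position $(1,2)$ multiplies $-C$. The $(2,2)$ entry is $CI+T_{22}=T_{22}+C$. The $(1,2)$ entry stays $I$, and the $(2,1)$ entry is some element of $\mathcal{B}$ that we do not need to track. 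The conjugation therefore shifts an arbitrary $C$ from one diagonal block to the other while keeping the trace-type sum fixed.

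Choose $C=T_{11}-[A_1,B_1]$. Then the $(1,1)$ entry is $[A_1,B_1]$. The $(2,2)$ entry is $T_{22}+T_{11}-[A_1,B_1]=[A_2,B_2]$. Now \Cref{lem diagonal-commutator}, applied with $n=2$ and the diagonal elements $\mathrm{diag}(A_1,0),\mathrm{diag}(B_1,0)$ and $\mathrm{diag}(0,A_2),\mathrm{diag}(0,B_2)$, shows that $XTX^{-1}$ is a single commutator. Hence $T$ is a single commutator.

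There is no real obstacle. The only step needing care is the matrix multiplication, to confirm the signs and that the $(1,2)$ entry is preserved. In any case only the diagonal entries matter for \Cref{lem diagonal-commutator}.
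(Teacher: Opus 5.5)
Your proposal is correct and is essentially the paper's proof. The paper forms $W^{-1}TW$ with $W$ lower unipotent and $(2,1)$ entry $[A_1,B_1]-T_{11}$; this is exactly your $XTX^{-1}$ with $C=T_{11}-[A_1,B_1]$ (that is, $W=X^{-1}$), followed by the same application of \Cref{lem diagonal-commutator} to the two diagonal corners, and your sign computations check out.
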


\begin{proof}
By assumption, $T_{11}+T_{22}=[A_1,B_1]+[A_2,B_2]$ for some $A_1,B_1,A_2,B_2\in\mathcal{B}$.
Let $W=
\begin{pmatrix}
	I_{\mathcal{B}} & 0\\
	X & I_{\mathcal{B}}
\end{pmatrix}$ and $S=W^{-1}TW$, where $X\in\mathcal{B}$.
Then
\begin{equation*}
	S=
	\begin{pmatrix}
		T_{11}+X & I_{\mathcal{B}}\\
		T_{21}+T_{22}X-XT_{11}-X^2 & T_{22}-X
	\end{pmatrix}.
\end{equation*}
By taking $X=[A_1,B_1]-T_{11}$, we have
\begin{equation*}
	S=
	\begin{pmatrix}
		[A_1,B_1] & I_{\mathcal{B}}\\
		S_{21} & [A_2,B_2]
	\end{pmatrix}.
\end{equation*}
It follows from \Cref{lem diagonal-commutator} that $S=[A,B]$ for some elements $A,B\in M_2(\mathcal{B})$.
Thus, $T=WSW^{-1}=[WAW^{-1},WBW^{-1}]$.
This completes the proof.
\end{proof}

The next technical lemma is obtained by repeatedly applying Zhang's result \cite[Corollary 1.3]{Zhang-90-JOT}.

\begin{lemma}\label{lem P-decomposition-Q}
Let $\mathcal{A}$ be a simple unital $C^*$-algebra with real rank zero.
Then for any nonzero projection $Q$ and any projection $P$, there is a finite orthogonal decomposition
\begin{equation*}
	P=P_1+P_2+\cdots+P_n
\end{equation*}
such that $P_j\precsim Q$ for every $1\leqslant j\leqslant n$.
\end{lemma}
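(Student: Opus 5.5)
The plan is to combine the fullness of $Q$, which comes from simplicity, with the Riesz decomposition property for projections in real rank zero $C^*$-algebras. We read Zhang's result \cite[Corollary 1.3]{Zhang-90-JOT} as the two-summand case of that property: if $P\precsim Q_1\oplus Q_2$, then $P=P'+P''$ with $P'\perp P''$, $P'\precsim Q_1$ and $P''\precsim Q_2$. This reading is an assumption, and the last paragraph discusses what to do if the cited result is stated differently.

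\emph{Step 1: $I\precsim Q\otimes I_m$ for some $m$.} Since $Q\neq 0$ and $\mathcal{A}$ is simple, the closed two-sided ideal generated by $Q$ is $\mathcal{A}$. So there are $X_1,\dots,X_m\in\mathcal{A}$ with
\begin{equation*}
	\Bigl\|I-\sum_{k=1}^m X_kQX_k^*\Bigr\|<1 .
\end{equation*}
Hence $C=\sum_k X_kQX_k^*$ is invertible. Put $R=(QX_1^*,\dots,QX_m^*)^{T}$, a column in $M_{m,1}(\mathcal{A})$. Then $R^*R=C$, and $W=RC^{-1/2}$ satisfies $W^*W=I$. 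Moreover $WW^*$ lies under $\operatorname{diag}(Q,\dots,Q)$, because each entry of $R$ lies in $Q\mathcal{A}$. Therefore
\begin{equation*}
	P\leqslant I\precsim Q\oplus\cdots\oplus Q \quad(m~\text{copies})
\end{equation*}
in $M_m(\mathcal{A})$.

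\emph{Step 2: split by induction on $m$.} The algebra $M_m(\mathcal{A})$ is again simple with real rank zero, and the Riesz decomposition statement is applied there. Write $Q^{\oplus m}=Q\oplus Q^{\oplus(m-1)}$. By the two-summand Riesz decomposition, $P=P_1+P'$ with $P_1\perp P'$, $P_1\precsim Q$ and $P'\precsim Q^{\oplus(m-1)}$. Both $P_1$ and $P'$ are projections in $\mathcal{A}$, being subprojections of $P\in\mathcal{A}$. Repeating the argument on $P'$ and iterating $m-1$ times gives
\begin{equation*}
	P=P_1+\cdots+P_n,\qquad n=m,
\end{equation*}
with pairwise orthogonal $P_j\precsim Q$. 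Any $P_j$ that happen to be zero can be discarded or kept; it makes no difference.

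The main obstacle is making sure the decomposition actually lands inside $P$, with $P_j\leqslant P$ and all $P_j\in\mathcal{A}$, rather than merely giving an equivalent copy of $P$ that decomposes. This is precisely what the refinement form of Riesz decomposition provides, since it decomposes $P$ itself. If Zhang's corollary is instead stated only as ``every nonzero projection has a nonzero subprojection $\precsim Q$'', the fallback is different. One would approximate by real rank zero and run a finite exhaustion, using Step 1 to bound the number of pieces. That route is harder, and the refinement version above is the one to try first.
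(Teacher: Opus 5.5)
Your proof is correct and follows the paper's route: use simplicity to get $P\precsim Q\otimes I_n$ in $M_n(\mathcal{A})$ (you make this explicit with the column isometry $W=RC^{-1/2}$, whereas the paper only asserts it), then iterate Zhang's Riesz decomposition in the real rank zero algebra $M_n(\mathcal{A})$. The only cosmetic difference is that the paper decomposes the subprojection $R=WW^*\leqslant Q\otimes I_n$ and pulls the pieces back to $P$ via $P_j\otimes e_{11}=W^*R_jW$; this conjugation is exactly how your ``$\precsim$'' form of the corollary follows from a ``$\leqslant$'' form, so your assumed reading of the cited result is harmless.
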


\begin{proof}
Since $\mathcal{A}$ is simple and $Q$ is nonzero, there exists $n\geqslant 1$ such that
\begin{equation*}
	P\otimes e_{11}\precsim Q\otimes I_n\in M_n(\mathcal{A}).
\end{equation*}
Thus, there exists a partial isometry $W$ in $M_n(\mathcal{A})$ such that
\begin{equation*}
	W^*W=P\otimes e_{11}\quad\text{and}\quad R:=WW^*\leqslant Q\otimes I_n.
\end{equation*}
Since $M_n(\mathcal{A})$ has real rank zero, repeated application of \cite[Corollary 1.3]{Zhang-90-JOT} gives mutually orthogonal projections $\{R_j\}_{j=1}^n$ satisfying that
\begin{equation*}
	R=R_1+R_2+\cdots+R_n,\quad R_j\precsim Q\otimes e_{jj}\quad\text{for}~1\leqslant j\leqslant n.
\end{equation*}
Since $W^*R_jW\leqslant W^*W=P\otimes e_{11}$, there exists a projection $P_j$ in $\mathcal{A}$ such that
\begin{equation*}
	P_j\otimes e_{11}=W^*R_jW.
\end{equation*}
Then $P=P_1+P_2+\cdots+P_n$ and
\begin{equation*}
	P_j\otimes e_{11}\sim R_j\precsim Q\otimes e_{jj}\sim Q\otimes e_{11}.
\end{equation*}
Thus, $P_j\precsim Q$ for every $1\leqslant j\leqslant n$.
This completes the proof.
\end{proof}

In the following proposition, we show how to reduce a two-commutator assumption to a single-commutator one.

\begin{proposition}\label{prop 2-to-1}
Let $\mathcal{A}$ be a simple unital $C^*$-algebra with real rank zero and nonempty $T(\mathcal{A})$.
Suppose that
\begin{equation*}
	\bigcap_{\tau_P\in T(P\mathcal{A}P)}\ker\tau_P=\mathfrak{c}(P\mathcal{A}P)+\mathfrak{c}(P\mathcal{A}P)
\end{equation*}
for every nonzero projection $P$ in $\mathcal{A}$.
Then every element $T$ in $\mathcal{A}$ with $\tau(T)=0$ for all $\tau\in T(\mathcal{A})$ is a single commutator.
\end{proposition}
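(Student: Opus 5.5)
If $T$ is scalar, say $T=\lambda I$, then $\tau(T)=\lambda=0$, so $T=0=[0,0]$. Assume from now on that $T$ is nonscalar. The plan is to use similarity to bring $T$ into a block form with an identity corner and vanishing diagonal blocks outside a $2\times 2$ corner. Then Lemma~\ref{lem 2.3} handles that corner and \Cref{lem diagonal-commutator} assembles the pieces. Since commutators are preserved under similarity ($W[A,B]W^{-1}=[WAW^{-1},WBW^{-1}]$) and traces are similarity invariant, we may replace $T$ by similar elements freely.

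First, apply \Cref{lem S12}. This gives orthogonal nonzero projections $P_1,P_2$, a partial isometry $V_{12}$ with $V_{12}V_{12}^*=P_1$ and $V_{12}^*V_{12}=P_2$, and $S\sim T$ with $P_1SP_2=V_{12}$. Let $Q=I-P_1-P_2$. If $Q\neq 0$, \Cref{lem P-decomposition-Q} (applied to $Q$ with the nonzero projection $P_1$) gives $Q=P_3+\cdots+P_n$ with $P_j\precsim P_1$. Choose partial isometries $V_{1j}$ with $V_{1j}^*V_{1j}=P_j$ and $V_{1j}V_{1j}^*\leqslant P_1$. Now \Cref{lem 12=V} yields $S'\sim S$ with $P_1S'P_2=V_{12}$ and $P_jS'P_j=0$ for $j\geqslant 3$.

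Next, set $E=P_1+P_2$. Then $\{E,P_3,\dots,P_n\}$ is a partition of $I$, and $P_jS'P_j=0=[0,0]$ in $P_j\mathcal{A}P_j$. By \Cref{lem diagonal-commutator}, it suffices to show that $ES'E$ is a single commutator in $E\mathcal{A}E$. Using the matrix units $P_1$, $V_{12}$, $V_{12}^*$, $P_2$, identify $E\mathcal{A}E\cong M_2(\mathcal{B})$ with $\mathcal{B}=P_1\mathcal{A}P_1$. Under this identification $ES'E$ takes the form $\begin{pmatrix}T_{11}&I_{\mathcal{B}}\\ T_{21}&T_{22}\end{pmatrix}$, where $T_{11}=P_1S'P_1$ and $T_{22}=V_{12}(P_2S'P_2)V_{12}^*$. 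By Lemma~\ref{lem 2.3}, it remains to show that $T_{11}+T_{22}$ is a sum of two commutators in $\mathcal{B}$. By the hypothesis applied to the nonzero projection $P=P_1$, this reduces to checking that $T_{11}+T_{22}$ lies in the kernel of every tracial state on $P_1\mathcal{A}P_1$.

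The trace bookkeeping is the step I expect to need the most care. By \Cref{lem trace-PAP}, every tracial state on $P_1\mathcal{A}P_1$ has the form $\tau(\cdot)/\tau(P_1)$ with $\tau\in T(\mathcal{A})$. For such $\tau$, the trace property gives $\tau(V_{12}XV_{12}^*)=\tau(XV_{12}^*V_{12})=\tau(X)$ for $X\in P_2\mathcal{A}P_2$, and therefore
\begin{equation*}
\tau(T_{11}+T_{22})=\tau(P_1S'P_1)+\tau(P_2S'P_2)=\tau(S')-\sum_{j=3}^n\tau(P_jS'P_j)=\tau(S')=\tau(T)=0.
\end{equation*}
Here we use $\tau(S')=\sum_j\tau(P_jS'P_j)$, which holds because the off-diagonal terms $\tau(P_iS'P_j)=\tau(P_jP_iS')$ vanish for $i\neq j$. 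Hence $T_{11}+T_{22}\in\mathfrak{c}(\mathcal{B})+\mathfrak{c}(\mathcal{B})$. Then Lemma~\ref{lem 2.3} makes $ES'E$ a single commutator, \Cref{lem diagonal-commutator} makes $S'$ a single commutator, and undoing the similarities shows that $T$ is a single commutator. The case $Q=0$ is the same argument with $E=I$.
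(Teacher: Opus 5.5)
Your proof is correct and follows the paper's argument. You normalize $T$ via \Cref{lem S12}, \Cref{lem P-decomposition-Q} and \Cref{lem 12=V}, then apply the hypothesis on $P_1\mathcal{A}P_1$ to $P_1S'P_1+V_{12}S'V_{12}^*$ (the paper's $D$), whose traces vanish by \Cref{lem trace-PAP}, and finish with \Cref{lem 2.3} on the $(P_1+P_2)$-corner and \Cref{lem diagonal-commutator}. Your additions (the scalar case, the explicit $M_2(P_1\mathcal{A}P_1)$ identification, and the vanishing of the off-diagonal trace terms) only make explicit what the paper leaves implicit.
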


\begin{proof}
Let $T$ be a nonscalar element in $\mathcal{A}$ with $\tau(T)=0$ for all $\tau\in T(\mathcal{A})$.
By \Cref{lem 12=V}, \Cref{lem S12}, and \Cref{lem P-decomposition-Q}, there are projections $\{P_j\}_{j=1}^n$ with sum $I$, partial isometries $\{V_{1j}\}_{j=2}^n$, and an element $S$ similar to $T$, such that
\begin{equation*}
	V_{12}V_{12}^*=P_1,\quad V_{1j}V_{1j}^*\leqslant P_1\quad\text{for}~3\leqslant j\leqslant n,\quad  V_{1j}^*V_{1j}=P_j\quad\text{for}~2\leqslant j\leqslant n,
\end{equation*}
and
\begin{equation*}
	P_1SP_2=V_{12},\quad P_jSP_j=0\quad\text{for}~3\leqslant j\leqslant n.
\end{equation*}
Let $D=P_1SP_1+V_{12}SV_{12}^*\in P_1\mathcal{A}P_1$.
By \Cref{lem trace-PAP}, for every $\tau_{P_1}\in T(P_1\mathcal{A}P_1)$, we have
\begin{equation*}
	\tau_{P_1}(D)=\frac{\tau(P_1SP_1+V_{12}SV_{12}^*)}{\tau(P_1)}=\frac{\tau(S)}{\tau(P_1)}=\frac{\tau(T)}{\tau(P_1)}=0.
\end{equation*}
Hence there are elements $A_1,B_1,A_2,B_2$ in $P_1\mathcal{A}P_1$ such that $D=[A_1,B_1]+[A_2,B_2]$.
By \Cref{lem 2.3}, $(P_1+P_2)S(P_1+P_2)$ is a single commutator in $(P_1+P_2)\mathcal{A}(P_1+P_2)$.
We complete the proof by \Cref{lem diagonal-commutator}.
\end{proof}

We omit the proof of the following proposition because it is the same as that of \Cref{prop 2-to-1}.

\begin{proposition}\label{prop 2-to-1-traceless}
Let $\mathcal{A}$ be a simple unital $C^*$-algebra with real rank zero.
Suppose that
\begin{equation*}
	P\mathcal{A}P=\mathfrak{c}(P\mathcal{A}P)+\mathfrak{c}(P\mathcal{A}P)
\end{equation*}
for every nonzero projection $P$ in $\mathcal{A}$.
Then every nonscalar element $T$ in $\mathcal{A}$ is a single commutator.
\end{proposition}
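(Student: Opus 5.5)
The argument will follow the proof of \Cref{prop 2-to-1}, with the trace condition dropped because the hypothesis now gives every element of a corner as a sum of two commutators.

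Fix a nonscalar $T\in\mathcal{A}$. By \Cref{lem S12} there are nonzero orthogonal projections $P_1,P_2$, a partial isometry $V_{12}$ with $V_{12}V_{12}^*=P_1$ and $V_{12}^*V_{12}=P_2$, and an element similar to $T$ whose $(1,2)$-corner equals $V_{12}$. Set $P=I-P_1-P_2$.

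If $P\neq 0$, apply \Cref{lem P-decomposition-Q} with $Q=P_1$. This gives $P=P_3+\cdots+P_n$ with each $P_j\precsim P_1$. We may assume every $P_j$ is nonzero, and we choose partial isometries $V_{1j}$ with $V_{1j}^*V_{1j}=P_j$ and $V_{1j}V_{1j}^*\leqslant P_1$. Now \Cref{lem 12=V} yields $S$ similar to $T$ with
\begin{equation*}
	P_1SP_2=V_{12}\quad\text{and}\quad P_jSP_j=0\quad\text{for}~3\leqslant j\leqslant n.
\end{equation*}
Each zero corner is trivially a commutator, namely $[0,0]$, in $P_j\mathcal{A}P_j$. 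If $P=0$, we skip this step and take $n=2$.

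Next, identify $(P_1+P_2)\mathcal{A}(P_1+P_2)$ with $M_2(P_1\mathcal{A}P_1)$ via $V_{12}$. Concretely, send $x$ to the matrix with entries $P_1xP_1$, $P_1xV_{12}^*$, $V_{12}xP_1$, $V_{12}xV_{12}^*$. Under this identification the $(1,2)$ entry of $(P_1+P_2)S(P_1+P_2)$ is $V_{12}V_{12}^*=P_1=I_{P_1\mathcal{A}P_1}$.

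By hypothesis, applied to the nonzero projection $P_1$, the element
\begin{equation*}
	D=P_1SP_1+V_{12}SV_{12}^*
\end{equation*}
is a sum of two commutators in $P_1\mathcal{A}P_1$; no trace computation is needed. Then \Cref{lem 2.3} shows that $(P_1+P_2)S(P_1+P_2)$ is a single commutator in $(P_1+P_2)\mathcal{A}(P_1+P_2)$. The identification is a $*$-isomorphism, so single commutators correspond.

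Finally, apply \Cref{lem diagonal-commutator} to the decomposition $I=(P_1+P_2)+P_3+\cdots+P_n$. Every diagonal corner of $S$ is a commutator in its own corner, so $S$ is a single commutator. Since similarity preserves commutators ($X[A,B]X^{-1}=[XAX^{-1},XBX^{-1}]$), so is $T$.

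The only points needing care are bookkeeping: discarding zero summands from \Cref{lem P-decomposition-Q}, and verifying the matrix identification used in \Cref{lem 2.3}. Both are exactly as in the proof of \Cref{prop 2-to-1}, so I expect no genuine obstacle.
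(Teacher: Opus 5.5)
Your proposal is correct and follows the paper's intended proof, which it omits as "the same as that of \Cref{prop 2-to-1}": you use \Cref{lem S12}, \Cref{lem P-decomposition-Q}, and \Cref{lem 12=V}, then \Cref{lem 2.3} on $(P_1+P_2)\mathcal{A}(P_1+P_2)$ and \Cref{lem diagonal-commutator}, with the trace computation for $D$ replaced by a direct appeal to the hypothesis on $P_1\mathcal{A}P_1$. Your extra bookkeeping — the explicit matrix-unit identification with $M_2(P_1\mathcal{A}P_1)$, the case $I=P_1+P_2$, and zero summands — fills in details the paper leaves implicit.
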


We can now prove the main theorems in this paper.

\begin{theorem}\label{thm trace}
Let $\mathcal{A}$ be a simple unital $C^*$-algebra with real rank zero, nonempty tracial state space $T(\mathcal{A})$, and strict comparison of projections.
If one of the following conditions holds: $(1)$ $\mathcal{A}$ has unique trace, $(2)$ $\mathcal{A}$ is separable and has stable rank one, then
\begin{equation*}
	\bigcap_{\tau\in T(\mathcal{A})}\ker\tau=\mathfrak{c}(\mathcal{A}).
\end{equation*}
\end{theorem}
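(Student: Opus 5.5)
The plan is to reduce \Cref{thm trace} to \Cref{prop 2-to-1} by checking its hypothesis for every corner $P\mathcal{A}P$, and to treat scalars separately. First, the inclusion $\mathfrak{c}(\mathcal{A})\subseteq\bigcap_{\tau}\ker\tau$ is immediate, since every trace kills commutators. For the reverse inclusion, take $T$ with $\tau(T)=0$ for all $\tau\in T(\mathcal{A})$. If $T=\lambda I$ is scalar, then $\tau(T)=\lambda=0$ because $T(\mathcal{A})\neq\varnothing$, and $0=[0,0]$. So we may assume $T$ is nonscalar; this is exactly the situation handled in the proof of \Cref{prop 2-to-1}.

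Next we verify, for every nonzero projection $P$, that
\[
\bigcap_{\tau_P\in T(P\mathcal{A}P)}\ker\tau_P=\mathfrak{c}(P\mathcal{A}P)+\mathfrak{c}(P\mathcal{A}P).
\]
The corner $\mathcal{B}=P\mathcal{A}P$ is again simple and unital with unit $P$, and it has real rank zero because real rank zero passes to hereditary subalgebras. By \Cref{lem trace-PAP}, its tracial states are exactly the normalizations $\tau(\cdot)/\tau(P)$ for $\tau\in T(\mathcal{A})$. In particular $T(\mathcal{B})\neq\varnothing$, since $\tau(P)>0$ by simplicity.

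Strict comparison also passes to $\mathcal{B}$. If $Q_1,Q_2\leqslant P$ satisfy $\tau_P(Q_1)<\tau_P(Q_2)$ for all $\tau_P$, then $\tau(Q_1)<\tau(Q_2)$ for all $\tau\in T(\mathcal{A})$. Hence $Q_1\precsim Q_2$ in $\mathcal{A}$, with the implementing partial isometry $V$ satisfying $V^*V=Q_1$ and $VV^*\leqslant Q_2\leqslant P$, so $V\in P\mathcal{A}P$. The structural hypotheses are inherited as well. In case (1), $\mathcal{B}$ has unique trace by \Cref{lem trace-PAP}. In case (2), $\mathcal{B}$ is separable, and stable rank one passes to corners (Rieffel/Blackadar).

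With these properties in hand, we invoke the two-commutator theorems for the corner. In case (2) this is Kaftal--Ng--Zhang \cite[Theorem 3.4]{KNZ-14}. In case (1) we want the analogous statement without separability or stable rank one. I would look first at Marcoux's two-commutator results \cite{Marcoux-06}, or rederive the statement from Fack's seven-self-commutator style argument together with Marcoux's reduction. Once the displayed equality holds for every nonzero $P$, \Cref{prop 2-to-1} gives that $T$ is a single commutator.

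The main obstacle is case (1). The two-commutator theorem in the literature (KNZ) is stated under separability and stable rank one, and I need a version for unique trace, real rank zero and strict comparison. My first attempt would be to prove it directly. With one trace, the trace-zero self-adjoint part can be diagonalized approximately via real rank zero. Strict comparison then lets one pair spectral projections of equal trace, giving sums of commutators with small remainder. An absorption or iteration argument in the style of Marcoux (Lemma 3.8 type) then collapses the result to two commutators. If that fails, I would check whether Ng's results \cite{Ng-15} supply the unique-trace two-commutator statement directly.
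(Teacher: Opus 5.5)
Your reduction is the paper's own. You handle scalars trivially, check that every corner $P\mathcal{A}P$ inherits simplicity, real rank zero, strict comparison, and (via \Cref{lem trace-PAP}) unique trace or separability plus stable rank one, and then feed the two-commutator equality for corners into \Cref{prop 2-to-1}. Your inheritance checks are correct, and so is case (2) via \cite[Theorem 3.4]{KNZ-14}.

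The gap is case (1). You never establish $\ker\tau_P=\mathfrak{c}(P\mathcal{A}P)+\mathfrak{c}(P\mathcal{A}P)$ for the corners; you only list where one might look and sketch a direct proof. The missing ingredient is Marcoux's two-commutator theorem \cite[Theorem 3.10]{Marcoux-06}. It says that a simple unital $C^*$-algebra with real rank zero, a unique tracial state $\tau$, and strict comparison of projections satisfies $\ker\tau=\mathfrak{c}+\mathfrak{c}$, with no separability or stable rank one hypothesis. You have already shown that each $P\mathcal{A}P$ satisfies exactly these hypotheses. Applying that theorem to every corner closes case (1), which is precisely what the paper does.

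Your fallback sketch would not substitute for that theorem. Strict comparison gives $Q_1\precsim Q_2$ only when $\tau(Q_1)<\tau(Q_2)$, so any pairing of spectral projections leaves remainders. Approximate diagonalization from real rank zero likewise leaves a nonzero error. Turning sums of commutators up to a small error into an exact sum of two commutators is the whole difficulty, and your appeal to an absorption or iteration argument names that step rather than carrying it out. Fack's seven-self-commutator theorem is proved for simple AF algebras, so it does not apply as stated to a general real rank zero algebra. As written, your argument proves the theorem under condition (2) only.
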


\begin{proof}
For any nonzero projection $P$ in $\mathcal{A}$, $P\mathcal{A}P$ is a simple unital $C^*$-algebra with real rank zero and has strict comparison of projections by \cite[Lemma 3.5]{Marcoux-06}.

Suppose that (1) holds.
By \Cref{lem trace-PAP}, $P\mathcal{A}P$ has a unique tracial state $\tau_P$.
It follows from \cite[Theorem 3.10]{Marcoux-06} that
\begin{equation*}
	\ker\tau_P=\mathfrak{c}(P\mathcal{A}P)+\mathfrak{c}(P\mathcal{A}P).
\end{equation*}
Suppose that $(2)$ holds.
Then $P\mathcal{A}P$ is separable and has stable rank one.
It follows from \cite[Theorem 3.4]{KNZ-14} that
\begin{equation*}
	\bigcap_{\tau_P\in T(P\mathcal{A}P)}\ker\tau_P=\mathfrak{c}(P\mathcal{A}P)+\mathfrak{c}(P\mathcal{A}P).
\end{equation*}
We finish the proof by \Cref{prop 2-to-1}.
\end{proof}

By \Cref{prop 2-to-1-traceless} and the method adopted in the proof of \Cref{thm trace}, we can obtain the following theorem.

\begin{theorem}\label{thm traceless}
Let $\mathcal{A}$ be a simple unital purely infinite $C^*$-algebra.
Then
\begin{equation*}
	(\mathcal{A}\setminus\mathbb{C}I)\cup\{0\}=\mathfrak{c}(\mathcal{A}).
\end{equation*}
\end{theorem}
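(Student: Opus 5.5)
We prove the two inclusions separately, with the inclusion $\supseteq$ being the substantial one.

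For $\mathfrak{c}(\mathcal{A})\subseteq(\mathcal{A}\setminus\mathbb{C}I)\cup\{0\}$, it suffices to show that $\lambda I$ with $\lambda\neq 0$ is never a commutator. Since $\mathcal{A}$ is unital, a commutator equal to $\lambda I$ would give $[A/\lambda,B]=I$. In any unital Banach algebra the identity is not a commutator, by the Wielandt--Wintner argument: if $AB-BA=I$, then $AB^n-B^nA=nB^{n-1}$ for all $n$, and $B^n\neq 0$ for every $n$. This yields $n\|B^{n-1}\|\leqslant 2\|A\|\|B\|\|B^{n-1}\|$, which is impossible for large $n$. Hence every scalar commutator is $0$. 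Trivially $0=[I,I]$.

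For the reverse inclusion, we use \Cref{prop 2-to-1-traceless}. A simple unital purely infinite $C^*$-algebra has real rank zero by Zhang's theorem. It remains to check that $P\mathcal{A}P=\mathfrak{c}(P\mathcal{A}P)+\mathfrak{c}(P\mathcal{A}P)$ for every nonzero projection $P$, mirroring the corner reduction in the proof of \Cref{thm trace}.

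\begin{itemize}
\item If $P$ is a nonzero projection, then $P\mathcal{A}P$ is again simple, unital, and purely infinite, since hereditary subalgebras of a simple purely infinite algebra inherit these properties.
\item Pop's theorem \cite[Theorem 1 and Remark 3]{Pop-02} states that every element of a simple unital purely infinite $C^*$-algebra is a sum of two commutators. Applying it to $P\mathcal{A}P$ gives $P\mathcal{A}P=\mathfrak{c}(P\mathcal{A}P)+\mathfrak{c}(P\mathcal{A}P)$.
\end{itemize}

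\Cref{prop 2-to-1-traceless} then shows that every nonscalar element of $\mathcal{A}$ is a single commutator. Together with $0\in\mathfrak{c}(\mathcal{A})$, this gives the inclusion $\supseteq$.

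The main point requiring care is the proof of \Cref{prop 2-to-1-traceless}, which the paper says parallels that of \Cref{prop 2-to-1}. We check that nothing there uses traces in an essential way. \Cref{lem S12}, \Cref{lem P-decomposition-Q}, and \Cref{lem 12=V} need only simplicity and real rank zero. In the step that uses the trace condition, the element $D\in P_1\mathcal{A}P_1$ is now a sum of two commutators directly, by Pop's theorem applied to the corner $P_1\mathcal{A}P_1$, with no trace condition needed. \Cref{lem 2.3} then applies to $(P_1+P_2)S(P_1+P_2)$, viewed in $M_2(P_1\mathcal{A}P_1)$ via $V_{12}$, which makes it a single commutator. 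The remaining diagonal corners $P_jSP_j=0$ for $j\geqslant3$ are trivially commutators. Finally, \Cref{lem diagonal-commutator} shows that $S$, and hence $T$, is a single commutator, because similarity preserves commutators.
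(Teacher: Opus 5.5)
Your proposal is correct and follows the paper's proof. You pass to corners $P\mathcal{A}P$, which remain simple unital purely infinite, apply Pop's two-commutator theorem there, and conclude with \Cref{prop 2-to-1-traceless}. You also make explicit two points the paper leaves implicit: the easy inclusion via the Wintner--Wielandt argument, and Zhang's theorem that purely infinite simple algebras have real rank zero, which \Cref{prop 2-to-1-traceless} requires. In the Wintner--Wielandt step, $B^n\neq 0$ follows by downward induction from $nB^{n-1}=[A,B^n]$.
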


\begin{proof}
For every nonzero projection $P$ in $\mathcal{A}$, $P\mathcal{A}P$ is still a simple unital purely infinite $C^*$-algebra.
By Pop \cite[Theorem 1 and Remark 3]{Pop-02}, we have 
\begin{equation*}
	P\mathcal{A}P=\mathfrak{c}(P\mathcal{A}P)+\mathfrak{c}(P\mathcal{A}P).
\end{equation*}
We complete the proof by \Cref{prop 2-to-1-traceless}.
\end{proof}

\section*{Acknowledgments}
We are grateful to Professor Junsheng Fang for drawing our attention to this problem.
We thank Zhichao Liu for pointing out that the proof in the earlier version, given for UHF algebras, applies to a more general setting, which we adopt in the current version.

\end{document}